\theoremstyle{plain}
\newtheorem{theorem}{Theorem}[section]
\newtheorem{lemma}[theorem]{Lemma}
\newtheorem{proposition}[theorem]{Proposition}
\newtheorem{corollary}[theorem]{Corollary}
\newtheorem{remark}[theorem]{Remark}
\numberwithin{equation}{section}
\newcommand{\Rb}  {{\mathbb R}}
\newcommand{\Zb}  {{\mathbb Z}}
\newcommand{\As} {{\mathcal A}}
\newcommand{\Cs} {{\mathcal C}}
\newcommand{\Ns} {{\mathcal N}}
\DeclareMathOperator{\Var}{Var}
\renewcommand{\phi}{\varphi}
\newcommand{\ind}{1\!\kern-1pt \mathrm{I}}
\newcommand{\rsto}{]\!\kern-1.8pt ]}
\newcommand{\lsto}{[\!\kern-1.7pt [}
\newcommand\F{\mbox{I\kern-2pt F}}
\title[The deterministic limit of the Moran model]{The deterministic limit of the Moran model:\\ a uniform central limit theorem}
\author[F.~Cordero]{Fernando Cordero$^1$}
\address{$^1$Faculty of Technology, University of Bielefeld, Universit\"{a}tsstrasse 25, 33615 Bielefeld, Germany. E-mail: fcordero@techfak.uni-bielefeld.de}
\date{\today}%
\begin{document}
\begin{abstract}
We consider a Moran model with two allelic types, mutation and selection. In this work, we study the behaviour of the proportion of fit individuals when the size of the population tends to infinity, without any rescaling of parameters or time. We first prove that the latter converges, uniformly in compacts in probability, to the solution of an ordinary differential equation, which is explicitly solved. Next, we study the stability properties of its equilibrium points. Moreover, we show that the fluctuations of the proportion of fit individuals, after a proper normalization, satisfy a uniform central limit theorem in $[0,\infty)$. As a consequence, we deduce the convergence of the corresponding stationary distributions.
\vspace{.2cm}\\
\textbf{Keywords:} Moran model; mutation-selection models; limit of large populations; central limit theorem; density dependent populations\\
\textbf{Mathematics Subject Classification (2010):} Primary 92D25, 60F05; Secondary 60J27, 60J28

\end{abstract}
\maketitle
\section{Introduction}
One of the main goals in population genetics is to describe the evolution of a population subject to the evolutionary forces of mutation and selection. Significant work has been done in this direction (for review see \cite{CK70,Bu98,Du08,Eth}). Among a large variety of stochastic and deterministic models, finite population models are considered to be more realistic. However, it is not always easy to deal with them. This situation leads to consider large population approximations.

In this paper we consider a Moran model with population size $N$, two allelic types, mutation, and selection. Rather than looking to its classical diffusion limit approximation, we aim here to study the behaviour of the proportion of fit individuals when the size of the population tends to infinity, without any rescaling of parameters or time. The theory of density dependent families of Markov chains provide us with the necessary tools to achieve this goal (see \cite{Ku70,Ku76, Ku81,Ku86}).

In a first step, as an application of \cite[Theorem 3.1]{Ku70}, we show a dynamical law of large numbers, which tell us that the proportion of fit individuals converges, uniformly in compacts in probability, to the solution of a ordinary differential equation. The latter is explicitly solved and an analysis of the stability of its equilibrium points is given. We also establish the connection of the deterministic limit with the well-known deterministic $2$-allele parallel mutation-selection model (see \cite{CK70}). Next, we turn out to the study of the fluctuations of the proportion of fit individuals around its deterministic limit. More precisely, based on \cite[Theorem 2.7]{Ku76} and \cite[Theorem 8.5]{Ku81}, we prove that, after a proper normalization, these fluctuations behave asymptotically as a Gaussian process. This result takes the form of a uniform central limit theorem in $[0,\infty)$. In \cite[Theorem 8.5]{Ku81}, the corresponding Gaussian process is characterised as the solution of a stochastic differential equation, which in our setting can be explicitly solved. Finally, as a consequence of the previous results, we prove that the underlying stationary distributions converge to a Dirac mass at the unique stable equilibrium point of the aforementioned ordinary differential equation. 

The paper is organized as follows. In Section \ref{s2}, we shortly describe the $2$-type Moran model with selection and mutation and we recall its basic properties. In Section \ref{s3}, we state and prove our main results, which deal with the asymptotic behaviour of the proportion of fit individuals: (1) a law of large numbers - Proposition \eqref{lln}, (2) a uniform central limit theorem - Theorem \eqref{pctt}, and (3) the convergence of the stationary distributions - Corollary \eqref{csd}.

\section{\texorpdfstring{Some basic facts on the $2$-type Moran model}{}}\label{s2}
The $2$-type Moran model of size $N$ with selection and mutation describes the evolution in (continuous) time of a population of size $N$ in which each individual is characterised by a type $i\in\{0,1\}$. The underlying dynamic is given as follows. If an individual reproduces, its single offspring  inherits the parent's type and replaces a uniformly chosen individual, possibly its own parent. The replaced individual dies, keeping the size of the population constant. 
 
Individuals of type $1$ reproduce at rate $1$, whereas individuals of type $0$ reproduce at rate $1+s_N$, $s_N\geq0$. Mutation occurs independently of reproduction. An individual of type $i$ mutates to type $j$ at rate $u_N\,\nu_j$, $u_N\geq 0$, $\nu_j\in(0,1)$, $\nu_0+\nu_1=1$.

We denote by $X_t^N$ the number of individuals of type $0$ at time $t$. The process $X^N:=(X_t^N)_{t\geq 0}$ is a continuous-time Markov chain with infinitesimal generator 
\begin{equation}\label{igen1}
\As_{X^N}f(k):=q_{k,k+1}^N\,\left(f(k+1)-f(k)\right)
+q_{k,k-1}^N\,\left(f(k-1)-f(k)\right),
\end{equation}
where
\begin{equation*}
 q_{k,k+\ell}^N:=\left\{ \begin{array}{ll}
                             k\,\frac{(N-k)}{N}\,(1+s_N) +(N-k)\,u_N\,\nu_0& \textrm{if $\ell=1$},\\
                            &\\
                             k\,\frac{(N-k)}{N} +k \,u_N\,\nu_1& \textrm{if $\ell=-1$},\\
                             &\\
                             0 & \textrm{if $|\ell|>1$},
                         \end{array}\right.
\end{equation*}
and $q_{k,k}^{N}:=-q_{k,k+1}^{N}-q_{k,k-1}^{N}$. In other words, $X^N$ is a birth-death process with birth rates $\lambda_k^N:=q_{k,k+1}^N$ and death rates $\mu_k^N:=q_{k,k-1}^N$.
In particular, when $u_N>0$, $X^N$ admits a unique stationary distribution, which is given by
\begin{equation*}
\pi_{X^N}(k):=C_N\prod_{i=1}^k \frac{\lambda_{i-1}^N}{\mu_i^N},\quad k\in\{0,\dots,N\}.
\end{equation*}
where $C_N$ is a normalising constant (see \cite{Du08}). When $u_N=0$, by contrast, $X^N$ is an absorbing Markov chain with $0$ and $N$ as absorbing states.

It is well known that, when the parameters of selection and mutation satisfy
\begin{equation}\label{dlas}
 \lim_{N\rightarrow\infty}Nu_N=\theta\in(0,\infty)\quad\textrm{and}\quad\lim_{N\rightarrow\infty}Ns_N=\sigma\in(0,\infty),
\end{equation}
the rescaled process $(X^N_{Nt}/N)_{t\geq 0}$ converges in distribution to the Wright-Fisher diffusion (see, e.g., \cite[p. 71, Lemma 5.11]{Eth}). By the latter we mean the continuous-time Markov process $Y:=(Y_t)_{t\geq 0}$ with infinitesimal generator given by 
$$\As_Y f(x):=x(1-x)\frac{d^2f}{dx^2}(x)+ \left(\sigma(1-x)x+\theta\nu_0(1-x)-\theta \nu_1x\right)\frac{df}{dx}(x),\quad x\in[0,1].$$

\section{The deterministic limit}\label{s3}
\subsection{A law of large numbers}\label{s3.0}
In contrast to the diffusion limit framework, where it is assumed that the parameters of the model satisfy \eqref{dlas}, we consider here constant parameters of mutation and selection, i.e. $u_N=u\geq 0$ and $s_N=s\geq 0$. In addition, we do not rescale the time. In this setting, a deterministic limit emerges when the size of the population converges to infinity. To see this, we first observe that the infinitesimal parameters of $X^N$ satisfy
$$q_{k,k+\ell}^N=N q\left(\frac{k}{N},\ell\right),\quad \ell\neq 0,$$
where $q:\Rb\times\Zb\setminus\{0\}\rightarrow \Rb$ is defined by
\begin{equation*}
q(p,\ell):=\left\{ \begin{array}{ll}
                             (1+s)\,p(1-p) +u\,\nu_0\,(1-p)& \textrm{if $\ell=1$},\\
                             
                             p(1-p)+u\,\nu_1\,p& \textrm{if $\ell=-1$},\\
                             
                             0 & \textrm{if $|\ell|>1$}.
                         \end{array}\right.
\end{equation*}
We conclude that the sequence of Markov chains $(X^N)_{N\geq 1}$ is density dependent in the sense of \cite[Sect. 3]{Ku70} (see also \cite{Ku76}). Let us denote by $Z^N:=(Z_t^N)_{t\geq 0}$ the continuous-time Markov chain given by
$$Z_t^N:=\frac{1}{N}X_t^N,\quad t\geq 0.$$
In addition, we set $F:=q(\cdot,1)-q(\cdot,-1)$, i.e.
$$F(x):=sx(1-x)+u\nu_0(1-x)-u\nu_1 x=-sx^2+(s-u)x+u\nu_0,\quad x\in \Rb.$$
The next result provides the asymptotic behaviour of the process $Z^N$ in the form of a dynamical law of large numbers.
\begin{proposition}[Law of large numbers]\label{lln}
For each $z_0\in[0,1]$, we denote by $z(z_0,\cdot)$ the solution of
 \begin{align}
z(0)&=z_0\in[0,1]\quad\textrm{and}\quad\frac{dz}{dt}(t)=F(z(t)),\quad t\in[0,T].\label{deteq}
\end{align} 
Assume that $\lim_{N\rightarrow\infty}Z_0^N=z_0\in[0,1]$. Then, for all $\varepsilon>0$, we have
 \begin{equation*}
\lim\limits_{N\rightarrow\infty} P\left(\sup\limits_{t\leq T}|Z_t^N-z(z_0,t)|>\varepsilon\right)=0,
\end{equation*}
i.e. $Z^N$ converges to the solution of \eqref{deteq} uniformly in compacts in probability (convergence ucp).
\end{proposition}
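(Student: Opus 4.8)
The plan is to obtain this as a direct application of Kurtz's law of large numbers for density-dependent families, namely \cite[Theorem 3.1]{Ku70}, and to make the argument transparent I would reconstruct its mechanism via the random time-change representation of $X^N$. Writing $Y_{+}$ and $Y_{-}$ for two independent unit-rate Poisson processes, one has
\begin{equation*}
X_t^N=X_0^N+Y_{+}\!\left(\int_0^t q_{X_s^N,X_s^N+1}^N\,ds\right)-Y_{-}\!\left(\int_0^t q_{X_s^N,X_s^N-1}^N\,ds\right).
\end{equation*}
Dividing by $N$ and using the density-dependence identity $q_{k,k+\ell}^N=Nq(k/N,\ell)$ established above, then centering each Poisson process as $Y_{\pm}(u)=u+\widetilde Y_{\pm}(u)$ with $\widetilde Y_{\pm}$ the associated compensated martingales, and recalling $F=q(\cdot,1)-q(\cdot,-1)$, I obtain the semimartingale decomposition
\begin{equation*}
Z_t^N=Z_0^N+\int_0^t F(Z_s^N)\,ds+M_t^N,
\end{equation*}
where $M^N$ is a martingale.

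Next I would control $M^N$ uniformly on $[0,T]$. Since $X_t^N\in\{0,\dots,N\}$ forces $Z_t^N\in[0,1]$ for all $t$, and $q(\cdot,\pm1)$ is bounded on $[0,1]$, the predictable quadratic variation satisfies $\langle M^N\rangle_T\leq CT/N$ for a constant $C$ depending only on the fixed parameters $s,u,\nu_0,\nu_1$. Doob's $L^2$ maximal inequality then gives $\E[\sup_{t\leq T}|M_t^N|^2]\leq 4CT/N$, so that $\sup_{t\leq T}|M_t^N|\to 0$ in probability.

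Finally I would compare $Z^N$ with the deterministic solution by a Gronwall argument. The drift $F$ is a quadratic polynomial, hence Lipschitz on the compact $[0,1]$ with some constant $L$; moreover the solution $z(z_0,\cdot)$ of \eqref{deteq} stays in $[0,1]$, since $F(0)=u\nu_0\geq 0$ and $F(1)=-u\nu_1\leq 0$ render the endpoints invariant for the flow. Subtracting the integral equations for $Z^N$ and $z(z_0,\cdot)$ and using the Lipschitz bound gives, for $\Delta_t^N:=Z_t^N-z(z_0,t)$,
\begin{equation*}
|\Delta_t^N|\leq |Z_0^N-z_0|+L\int_0^t|\Delta_s^N|\,ds+\sup_{r\leq T}|M_r^N|,\quad t\leq T.
\end{equation*}
Gronwall's inequality then yields $\sup_{t\leq T}|\Delta_t^N|\leq\big(|Z_0^N-z_0|+\sup_{r\leq T}|M_r^N|\big)e^{LT}$, whose right-hand side tends to $0$ in probability by the hypothesis $Z_0^N\to z_0$ and the previous step; this is exactly the asserted ucp convergence.

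I do not expect a genuine obstacle, as the statement is a textbook instance of the density-dependent limit theory. The only points requiring care are the confinement of both $Z^N$ and the limiting flow to $[0,1]$, so that a single Lipschitz constant for $F$ suffices, and the $O(1/N)$ quadratic-variation estimate for $M^N$; both are immediate from the explicit, bounded form of the rates. Should one prefer to invoke \cite[Theorem 3.1]{Ku70} as a black box, its verification reduces precisely to these two observations together with the convergence of the initial data.
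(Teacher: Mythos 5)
Your proposal is correct, and it differs from the paper's proof in an instructive way: the paper's proof is a pure verification-and-citation argument, while yours unfolds the mechanism behind the cited theorem. Concretely, the paper checks exactly the two facts you flag as "the only points requiring care" --- that $F$ is Lipschitz on $[0,1]$ with $F(0)=u\nu_0\geq 0$ and $F(1)=-u\nu_1\leq 0$ (so \eqref{deteq} has a unique global solution confined to $[0,1]$), and that the rates satisfy the boundedness and tail conditions \eqref{cond1} (trivial here, since $q(x,\ell)=0$ for $|\ell|>1$) --- and then invokes \cite[Theorem 3.1]{Ku70} (equivalently \cite[Theorem 11.2.1]{Ku86}) as a black box. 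You instead reconstruct that theorem's proof in this special birth-death setting: the Poisson random time-change representation, the semimartingale decomposition of $Z^N$ with drift $F$, the bound $\langle M^N\rangle_T\leq CT/N$ combined with Doob's $L^2$ maximal inequality, and a Gronwall comparison. Every step is sound; the only point worth making explicit is that the martingale property of the time-changed compensated Poisson processes is itself a (standard but nontrivial) piece of the machinery in \cite{Ku86}, so your argument is self-contained only modulo that fact. What your route buys is transparency and an explicit $O(1/\sqrt{N})$ estimate on the fluctuations, which foreshadows the central limit theorem of Section \ref{s3.3}; what the paper's route buys is brevity, and its verification of \eqref{cond1} is reused verbatim as a hypothesis when the stronger results of \cite{Ku76,Ku81} are applied later.
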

\begin{proof}
Note that the function $F$ is Lipschitz in $[0,1]$ and that $F(0)=u\nu_0\geq 0$ and $F(1)=-u\nu_1\leq 0$. We conclude that, for all $z_0\in[0,1]$, Eq. \eqref{deteq} has a unique solution $z(z_0,\cdot)$ defined in $[0,\infty)$, which in addition does not leave the interval $[0,1]$.  

Besides that, $q(x,\ell)=0$ when $|\ell|>1$, and hence 
\begin{equation}\label{cond1}
 \sup_{x\in[0,1]}\sum_{\ell}|\ell|q(x,\ell)<\infty\quad\textrm{and}\quad \lim_{d\rightarrow\infty}\sup_{x\in[0,1]}\sum_{|\ell|>d}|\ell|q(x,\ell)=0.
\end{equation}
The desired result follows therefore as an application of \cite[Theorem 3.1]{Ku70} (see also \cite[Theorem 11.2.1]{Ku86}).
\end{proof}
\begin{remark}
An analogous result has been shown in \cite{Da14} for a discrete-time version of the Moran model. 
\end{remark}
Note that when $s=u=0$, $F$ is identically equal to $0$. Thus, the solution of \eqref{deteq} is constant. When $s=0$ and $u>0$, Eq. \eqref{deteq} is linear 
and has a unique equilibrium point given by $\nu_0$. In the remaining case, Eq. \eqref{deteq} is a Riccati equation with constant coefficients and can hence be solved by means of quadratures (see \cite{Ya14}). Moreover, \eqref{deteq} has two equilibrium points, $x_0^-<0<x_0^+<1$, given by the zeros of $F$, i.e.
\begin{equation}\label{xo+-}
 x_0^-:=\frac{s-u-\sqrt{\Delta}}{2s}\quad\textrm{and}\quad x_0^+:=\frac{s-u+\sqrt{\Delta}}{2s},
\end{equation}
where $\Delta:=(s-u)^2+4su\nu_0$. For simplicity, when $s=0$ and $u>0$, we set $x_0^+:=\nu_0$ and $\Delta:=u$.

The following lemma gives the explicit expression of the solution of \eqref{deteq} and the stability properties of its equilibrium points.
\begin{lemma}\label{expsol}
For each $z_0\in[0,1]$, the solution of \eqref{deteq} is given by
 \begin{equation*}
z(z_0,t):=\left\{ \begin{array}{ll}
                             z_0& \textrm{if $s=u=0$},\\
                             &\\
                             \nu_0 +(z_0-\nu_0)\,e^{-ut}& \textrm{if $s=0$ and $u>0$},\\
                             &\\
                             \frac{x_0^+(z_0-x_0^-)-x_0^-(z_0-x_0^+)e^{-s\left(x_0^+-x_0^-\right)t}}{(z_0-x_0^-)-(z_0-x_0^+)e^{-s\left(x_0^+-x_0^-\right)t}}& \textrm{if $s>0$},
                         \end{array}\right.
\end{equation*}
for all $t\geq 0$. Moreover, when $s+u>0$, the equilibrium point $x_0^+$ is asymptotically stable. More precisely,
 \begin{equation}\label{at}
 \lim\limits_{t\rightarrow\infty}z(z_0,t)=x_0^+.
\end{equation}
In addition, when $s>0$, the equilibrium point $x_0^-$ is unstable.
\end{lemma}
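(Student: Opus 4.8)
The plan is to integrate the autonomous equation \eqref{deteq} explicitly in each of the three regimes and then read off the long-time behaviour directly from the resulting formulas, cross-checking the stability assertions by linearisation. Since Proposition \ref{lln} already guarantees a unique solution on $[0,\infty)$ that stays in $[0,1]$, it suffices in each case to exhibit a function satisfying both the differential equation and the initial condition; I would derive the candidate by separation of variables and, where convenient, simply verify it satisfies \eqref{deteq}.

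The first two cases are immediate. When $s=u=0$ we have $F\equiv 0$, so $z(z_0,t)\equiv z_0$. When $s=0$ and $u>0$, the equation reads $\frac{dz}{dt}=u(\nu_0-z)$, a linear first-order ODE whose solution with $z(0)=z_0$ is $\nu_0+(z_0-\nu_0)e^{-ut}$; since here $F'(z)=-u<0$, the equilibrium $x_0^+=\nu_0$ is asymptotically stable, and letting $t\to\infty$ confirms \eqref{at}.

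For the case $s>0$ I would first record the factorisation $F(z)=-s(z-x_0^+)(z-x_0^-)$, which follows from \eqref{xo+-} because $x_0^\pm$ are the roots of $F$. Separating variables and applying the partial-fraction identity $\frac{1}{(z-x_0^+)(z-x_0^-)}=\frac{1}{x_0^+-x_0^-}\big(\frac{1}{z-x_0^+}-\frac{1}{z-x_0^-}\big)$ yields, after integration, $\log\big|\frac{z-x_0^+}{z-x_0^-}\big|=-s(x_0^+-x_0^-)t+C$. Fixing $C$ through the initial condition gives $\frac{z-x_0^+}{z-x_0^-}=\frac{z_0-x_0^+}{z_0-x_0^-}e^{-s(x_0^+-x_0^-)t}$, and solving this affine relation for $z$ produces exactly the stated quotient. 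The long-time behaviour is then transparent: since $x_0^+>x_0^-$ the exponential tends to $0$ as $t\to\infty$, and the limit equals $x_0^+$, which is \eqref{at}.

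Finally, for the stability statements I would linearise. A short computation gives $F'(x)=-2sx+(s-u)$ and hence $F'(x_0^\pm)=\mp\sqrt{\Delta}$, so $F'(x_0^+)=-\sqrt{\Delta}<0$ (re-confirming asymptotic stability of $x_0^+$) while $F'(x_0^-)=\sqrt{\Delta}>0$, whence $x_0^-$ is unstable. The one point that genuinely requires care, and which I would flag as the only real subtlety, is the validity of the quotient formula on the whole half-line: one must check that the denominator $(z_0-x_0^-)-(z_0-x_0^+)w$, with $w:=e^{-s(x_0^+-x_0^-)t}$, never vanishes. Being affine in $w\in(0,1]$ and positive both at $w=0$ (value $z_0-x_0^->0$, since $z_0\geq 0>x_0^-$) and at $w=1$ (value $x_0^+-x_0^->0$), it stays positive for all $t\geq 0$, so the formula and the limit computation are justified.
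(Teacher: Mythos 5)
Your proof is correct and takes essentially the same route as the paper: the paper obtains the explicit formula by ``straightforward verification'' (you supply the separation-of-variables derivation, plus the worthwhile extra check that the denominator never vanishes for $t\geq 0$), and it proves the stability assertions exactly as you do, via the limit of the explicit solution and the signs of $F'(x_0^{\pm})$. Incidentally, your computation $F'(x_0^{+})=-\sqrt{\Delta}$ is the correct one for $s>0$; the paper's equation \eqref{as} displays $-\Delta$, a misprint, though only the sign matters for the argument.
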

\begin{proof}
 The first part of the statement is obtained through a straightforward verification. When $s+u>0$, \eqref{at} is obtained from a direct calculation of the corresponding limit, or simply noting that
\begin{equation}\label{as}
\frac{dF}{dx}(x_0^+)=-\Delta<0.
 \end{equation}
 Similarly, if $s>0$, $\frac{dF}{dx}(x_0^-)=\sqrt{\Delta}>0$. The last assertion follows.
\end{proof}
\subsection{The underlying deterministic mutation-selection model}
In this section, we shortly describe the well-known deterministic $2$-allele parallel mutation-selection model and its relation with Eq. \eqref{deteq}. This model goes back to \cite[p. 265]{CK70} and is usually obtained via direct deterministic modelling, rather than via a limiting procedure starting from a stochastic model.

The deterministic $2$-allele parallel mutation-selection model describes the evolution of a population in which each individual is characterised by a type $i\in\{0,1\}$. The absolute frequencies of individuals of type $0$ and $1$ satisfy the following system of differential equations 
\begin{equation}\label{ldeteq}
\frac{dy}{dt}(t)=y(t)A,\quad\textrm{where}\quad A:=\begin{pmatrix}
     1+s-u\nu_1&u\nu_1\\u\nu_0&1-u\nu_0\\
    \end{pmatrix}.
\end{equation}
It is straightforward to see that, under the initial condition $y(0):=(z_0,1-z_0)$ with $z_0\in[0,1]$, the proportion of individuals of type $0$ evolves following  Eq. \eqref{deteq}. Therefore, we have
\begin{equation}\label{propy}
z(z_0,t)=\frac{y_0(t)}{y_0(t)+y_1(t)}. 
\end{equation}
Similarly, Eq. \eqref{deteq} can be turned into the linear system of differential equations \eqref{ldeteq} using the following transformation (see \cite{TMB74}, \cite{BW01} or \cite{BG07})
\begin{equation*}
 y(t):=\left(y_0(t),\,  y_1(t)\right)=\exp{\left(t+s\int\limits_0^t z(u)du\right)}\times\,\left(z(t),\,  1-z(t)\right).
\end{equation*}
We point out that this correspondence between the solutions of \eqref{deteq} and \eqref{ldeteq} gives an alternative way to obtain Lemma \ref{expsol}. We provide the details in the case $s>0$, the other cases are obtained in a similar way. We first express $y$ as
\begin{equation*}
y(t)=y(0)\,e^{At}.
\end{equation*}
Additionally, the eigenvalues of the matrix $A$ are $\lambda_+:=1+sx_0^+$ and $\lambda_-:=1+sx_0^-,$
with corresponding right eigenvectors given, up to multiplicative constants, by
\begin{equation*}
v_+:=\begin{pmatrix}
     u\nu_0+sx_0^+\\u\nu_0\\
    \end{pmatrix}\quad\textrm{and}\quad v_-:=\begin{pmatrix}
     u\nu_0+sx_0^-\\u\nu_0\\
    \end{pmatrix}.
\end{equation*}
Thus, the matrix $e^{At}$ can be explicitly computed through diagonalization of the matrix $A$. In particular, with the initial condition $y(0)=(z_0,\,1-z_0)$, we obtain
\begin{align*}
y_0(t)&=\frac{e^{(1+sx_0^+) t}}{x_0^+-x_0^-}\left(x_0^+(z_0-x_0^-)-x_0^-(z_0-x_0^+)e^{-s(x_0^+-x_0^-)t}\right),\\
y_1(t)&=\frac{e^{(1+sx_0^+) t}}{x_0^+-x_0^-}\left((1-x_0^+)(z_0-x_0^-)-(1-x_0^-)(z_0-x_0^+)e^{-s(x_0^+-x_0^-)t}\right).
\end{align*}
Plugging this in \eqref{propy} yields \eqref{expsol}. This alternative way of solving equation \eqref{deteq} has the advantage that it can be easily generalized to the multi-type case.
\subsection{\texorpdfstring{A uniform central limit theorem in $[0,\infty)$}{}}\label{s3.3}
We assume in the sequel that $u>0$. In this setting, it would be natural to study the behaviour of the stationary distribution of $Z^N$, denoted by $\pi_{Z^N}$, when $N$ tends to infinity. The convergence ucp of $Z^N$ to $z$, derived in Section \ref{s3.0}, is not sufficient to deduce such a result. For this reason, we aim to provide a uniform convergence result on $[0,\infty)$ providing the convergence of the limiting distributions. The central limit theorem in \cite{Ku76} (see also \cite{Ku81,No74}) for density dependent families of Markov chains give us a way to achieve our task. To see this, we first define
$$g(x):= q(x,1)+q(x,-1)=-(2+s)x^2+ \left(2+s-u(\nu_0-\nu_1)\right)\,x+ u\nu_0,\quad x\in\Rb.$$
Next, we consider, for $z_0\in[0,1]$, the Gaussian diffusion $V^{z_0}:=(V_t^{z_0})_{t\geq 0}$ given by
$$V_t^{z_0}:=\left\{ \begin{array}{ll}
                           F(z(z_0,t))\int\limits_0^t \frac{\sqrt{g(z(z_0,v))}}{F(z(z_0,v))}\,dB_v  & \textrm{if $z_0\neq x_0^+$},\\
                             
                           \sqrt{g(x_0^+)}\, e^{-\sqrt{\Delta}\,t}\int\limits_0^t e^{\sqrt{\Delta}\,v}\,dB_v  & \textrm{if $z_0=x_0^+$},
                         \end{array}\right.$$
where $(B_t)_{t\geq 0}$ is a standard Brownian motion and $z(z_0,\cdot)$ is the solution of \eqref{deteq}. One can easily check that, for all $x\in[0,1]$, $g(x)\geq u(\nu_0\wedge\nu_1)>0$. Hence, $V^{z_0}$ is well defined.

Now, we introduce the following characteristic functions:
$$\psi^{N}(t,\theta):=E\left[e^{i\theta\sqrt{N}\left(Z_t^N-z(z_0,t)\right)}\right]\quad\textrm{and}\quad\psi(t,\theta):=E\left[e^{i\theta V_t^{z_0}}\right]\quad t\geq0,\,\theta\in\Rb.$$
\begin{theorem}[Central limit theorem]\label{pctt}
Assume that $\lim_{N\rightarrow\infty}\sqrt{N}(Z_0^N-z_0)=0$. Then $\sqrt{N}(Z^N-z(z_0,\cdot))\xrightarrow[N\rightarrow\infty]{(d)}V^{z_0}$. Moreover, we have
\begin{equation*}
\lim\limits_{N\rightarrow\infty}\sup_{t\geq 0}|\psi_N(t,\theta)-\psi(t,\theta)|=0.
 \end{equation*}
\end{theorem}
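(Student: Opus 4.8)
The plan is threefold: obtain path-level convergence on compact intervals from Kurtz's central limit theorem for density dependent families, identify the limit by solving the associated linear stochastic differential equation in closed form, and then promote the convergence of the characteristic functions to a bound uniform over all $t\ge0$ by exploiting the asymptotic stability of $x_0^+$ proved in Lemma \ref{expsol}. I would begin by noting that, in the notation of \cite{Ku76,Ku86}, the drift of the density dependent family $(X^N)$ is $F=\sum_\ell\ell\,q(\cdot,\ell)$ and its infinitesimal variance is precisely $g=\sum_\ell\ell^2q(\cdot,\ell)$. Since $q(\cdot,\pm1)$ are polynomials, $F$ is $C^1$ with bounded derivative on $[0,1]$, $g$ is continuous there and bounded below by $u(\nu_0\wedge\nu_1)>0$, and the jump conditions \eqref{cond1} hold because $q(\cdot,\ell)=0$ for $|\ell|>1$. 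These, together with the hypothesis $\sqrt{N}(Z_0^N-z_0)\to0$ and the fact (from the proof of Proposition \ref{lln}) that $z(z_0,\cdot)$ never leaves $[0,1]$, are exactly the assumptions of \cite[Theorem 2.7]{Ku76} (equivalently \cite[Theorem 8.5]{Ku81}), whose application yields, for each $T>0$, the convergence in distribution on $[0,T]$ of $\sqrt{N}(Z^N-z(z_0,\cdot))$ to the unique solution $V^{z_0}$ of
\begin{equation*}
dV_t=F'(z(z_0,t))\,V_t\,dt+\sqrt{g(z(z_0,t))}\,dB_t,\qquad V_0=0.
\end{equation*}

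Next I would solve this equation with the integrating factor $\exp(-\int_0^tF'(z(z_0,r))\,dr)$. The key identity is that, along the trajectory, $\frac{d}{dt}\log|F(z(z_0,t))|=F'(z(z_0,t))$ whenever $F(z(z_0,t))\neq0$; this holds for every $t$ exactly when $z_0\neq x_0^+$, since the trajectory then approaches $x_0^+$ monotonically without ever reaching it. Hence $\exp(\int_v^tF'(z(z_0,r))\,dr)=F(z(z_0,t))/F(z(z_0,v))$, which gives the stated formula for $V^{z_0}$ when $z_0\neq x_0^+$. When $z_0=x_0^+$ the trajectory is constant, the equation has constant coefficients with $F'(x_0^+)=-\sqrt{\Delta}$, and its solution is exactly the Ornstein--Uhlenbeck integral displayed in the statement. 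This proves the first assertion.

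For the uniform statement I would reduce everything to moment control. As $V_t^{z_0}$ is centered Gaussian, $\psi(t,\theta)=\exp(-\tfrac12\theta^2\rho(t)^2)$, where $\rho(t)^2:=\Var(V_t^{z_0})$ solves
\begin{equation*}
\frac{d}{dt}\rho(t)^2=2F'(z(z_0,t))\,\rho(t)^2+g(z(z_0,t)),\qquad \rho(0)^2=0.
\end{equation*}
Since $z(z_0,t)\to x_0^+$ exponentially (Lemma \ref{expsol}) and $F'(x_0^+)=-\sqrt{\Delta}<0$, the coefficient $2F'(z(z_0,t))$ is eventually $\le-\delta<0$ and $\rho(t)^2\to g(x_0^+)/(2\sqrt{\Delta})$; in particular $\rho(t)^2$ is bounded on $[0,\infty)$, so the Gaussian moments $E[(V_t^{z_0})^k]$ are bounded uniformly in $t$. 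Writing $\mu_k^N(t):=E[(W_t^N)^k]$ with $W_t^N:=\sqrt{N}(Z_t^N-z(z_0,t))$, a Taylor expansion of $w\mapsto e^{i\theta w}$ shows that, for fixed $\theta$, uniform-in-$t$ convergence $\mu_k^N\to E[(V^{z_0})^k]$ for each $k$, together with uniform-in-$(t,N)$ moment bounds growing slowly enough in $k$, forces $\sup_{t\ge0}|\psi^N(t,\theta)-\psi(t,\theta)|\to0$, since the truncation remainder is dominated by $|\theta|^{K+1}/(K+1)!$ times a uniform moment bound and tends to $0$ as $K\to\infty$, uniformly in $(t,N)$.

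The hard part will therefore be the moment control, and this is where the asymptotic stability is indispensable. Applying the time-dependent generator of $W^N$ to $w\mapsto w^2$ gives, and here the identity is exact because $F$ and $g$ are quadratic,
\begin{align*}
\frac{d}{dt}\mu_2^N(t)={}&2F'(z(z_0,t))\,\mu_2^N(t)+g(z(z_0,t))\\
&+\frac{1}{\sqrt{N}}\,E\!\left[g'(z(z_0,t))\,W_t^N-2s(W_t^N)^3\right]-\frac{2+s}{N}\,\mu_2^N(t),
\end{align*}
and analogous equations for $\mu_k^N$ in which the linear part has coefficient $kF'(z(z_0,t))\to-k\sqrt{\Delta}<0$ and the coupling to the one-higher moment $\mu_{k+1}^N$ enters only at order $N^{-1/2}$. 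The strategy is to first establish a priori bounds $\sup_{t\ge0,\,N}\mu_k^N(t)<\infty$ for every $k$ using this dissipativity, then feed them back to control the $N^{-1/2}$ couplings, and finally compare $\mu_k^N$ with the Gaussian moments via a Grönwall argument on $[T,\infty)$, where $kF'(z(z_0,t))\le-\delta<0$, the interval $[0,T]$ being handled by the compact convergence of the first step. The main obstacle is precisely these uniform-in-time moment estimates: the hierarchy is not lower triangular, and only the exponential relaxation to $x_0^+$ furnished by Lemma \ref{expsol} makes the linear part of every level contractive for large $t$, allowing the hierarchy to be closed uniformly in $N$ and $t$.
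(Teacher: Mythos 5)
Your first step and your identification of the limit are correct and essentially match the paper: the compact-interval CLT for density dependent families (which the paper takes from \cite[Theorem 8.2]{Ku81}, equivalently \cite[Theorem 11.2.3]{Ku86}) gives convergence to the solution of the linear SDE, and your integrating-factor identity $\exp\bigl(\int_v^t F'(z(z_0,r))\,dr\bigr)=F(z(z_0,t))/F(z(z_0,v))$, valid for $z_0\neq x_0^+$ because the orbit never reaches $x_0^+$, is a legitimate (and slightly more direct) substitute for the paper's route, which instead matches the noise terms via the Dambis--Dubins--Schwartz theorem and invokes uniqueness for the two SDEs \eqref{sde1} and \eqref{sde2}.

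The gap is in the second half. The uniform-in-time statement is exactly the content of \cite[Theorem 2.7]{Ku76} and \cite[Theorem 8.5]{Ku81} --- the very results you cite at the outset, but which you use only on compact time intervals. The paper's proof of the uniform bound consists simply of verifying the hypotheses of those theorems: the drift derivative at the stable point is negative, $F'(x_0^+)=-\sqrt{\Delta}<0$ (Eq.~\eqref{as}), and both $Z^N$ and $z(z_0,\cdot)$ remain in $[0,1]$. You instead attempt to re-derive the uniform convergence from scratch by a method of moments, and that derivation is not complete. First, your moment hierarchy is not closed: the equation for $\mu_k^N$ couples to $\mu_{k+1}^N$ at order $N^{-1/2}$, so ``first establish a priori bounds for every $k$ using dissipativity'' is circular --- bounding level $k$ already requires control of level $k+1$. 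The crude estimate $N^{-1/2}E|W_t^N|^{k+1}\leq E|W_t^N|^{k}$ (from $|W_t^N|\leq\sqrt{N}$) folds the coupling back into the diagonal with a positive constant of order $s$, and the resulting coefficient $kF'(z(z_0,t))+Cs$ need not be negative: it is not true in general that $\sqrt{\Delta}>s$ (take $\nu_0<1/2$ and $u<2s(1-2\nu_0)$, so that $\Delta<s^2$), so the asserted closure ``uniformly in $N$ and $t$'' is unjustified. Second, even granting uniform moment bounds, your truncation argument requires those bounds to grow sub-factorially in $k$ for fixed $\theta$, which is also not established. Third, the uniform-in-$t$ convergence of each moment $\mu_k^N(t)$, which your Taylor expansion needs, is essentially equivalent in difficulty to the statement being proved and is again left as a plan (Gr\"{o}nwall on $[T,\infty)$ plus compact convergence on $[0,T]$) rather than carried out. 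In short, what you label the ``hard part'' is indeed hard, it is not done, and it is unnecessary: the stability condition \eqref{as} together with the boundedness of the state space places the model squarely within the hypotheses of \cite[Theorem 8.5]{Ku81} (or \cite[Theorem 2.7]{Ku76}), which yields $\lim_{N\to\infty}\sup_{t\geq0}|\psi^N(t,\theta)-\psi(t,\theta)|=0$ directly.
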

\begin{proof}
We focus only on the case $z_0\neq x_0^+$. The case $z_0=x_0^+$ is proven analogously. Note that, in addition to \eqref{cond1}, $F\in\Cs^1$ and
$\lim_{d\rightarrow\infty}\sup_{x\in[0,1]}\sum_{|\ell|>d}|\ell|^2q(x,\ell)=0.$
Then, we deduce from \cite[Theorem 8.2]{Ku81} (or \cite[Theorem 11.2.3]{Ku86}) that 
\begin{equation}\label{clta}
\sqrt{N}(Z^N-z(z_0,\cdot))\xrightarrow[N\rightarrow\infty]{(d)}\tilde{V}^{z_0},
\end{equation}
where $\tilde{V}^{z_0}$ is the solution of the stochastic differential equation
\begin{equation}\label{sde2}
\tilde{V}_t^{z_0}=W_{C_1(t)}-\widehat{W}_{C_{-1}(t)}+\int\limits_0^t\frac{dF}{dx}(z(z_0,v))\,\tilde{V}_v^{z_0}\,dv,
\end{equation}
with $W,\,\widehat{W}$ being independent Brownian motions, and $C_\ell(t):=\int_0^t q(z(z_0,v),\ell)dv$. Furthermore, $V^{z_0}$ solves the following stochastic differential equation (see \cite[Chap. 9, Ex. 2.7]{ReYor99}):
\begin{equation}\label{sde1}
V_t^{z_0}=\int\limits_0^t\sqrt{g(z(z_0,v))}\,dB_v+\int\limits_0^t\frac{dF}{dx}(z(z_0,v))\,V_v^{z_0}\,dv,\quad t\geq 0.
\end{equation}
The last terms of the right hand sides of \eqref{sde2} and \eqref{sde1} have the same form. We claim that 
$$\left(W_{C_1(t)}-\widehat{W}_{C_{-1}(t)}\right)_{t\geq 0}\overset{(d)}{=}\left(\int\limits_0^t\sqrt{g(z(z_0,v))}\,dB_v\right)_{t\geq 0}.$$
In this case, we deduce that $\tilde{V}^{z_0}\overset{(d)}{=}V^{z_0}$ (from the uniqueness of the solutions of \eqref{sde2} and \eqref{sde1}). Hence, the first statement follows from \eqref{clta}. In addition, since we have \eqref{as}, and $Z^N$ and $z(z_0,\cdot)$ do not leave the interval $[0,1]$, the remaining result is obtained from \cite[Theorem 8.5]{Ku81} (or \cite[Theorem 2.7]{Ku76}). 

Now we show the claim. The involved processes are local martingales with the same quadratic variation given by the deterministic function $\int_0^t g(z(z_0,v))dv$. In addition, $\int_0^\infty g(z(z_0,v))dv=\infty$. The claim follows as an application of the Dambis-Dubins-Schwartz theorem (see for ex. \cite[Chap. 5, Theorem 1.6]{ReYor99}).
\end{proof}
\begin{lemma}\label{chf}
The characteristic function of $V_t^{z_0}$ is given by 
\begin{equation}\label{chatff}
\psi(t,\theta)=\left\{ \begin{array}{ll}
                           \exp\left(-\frac{\theta^2}{2}\,F^2(z(z_0,t))\int\limits_{z_0}^{z(z_0,t)}\frac{g(y)}{F^3(y)}dy\right)  & \textrm{if $z_0\neq x_0^+$},\\
                             
                            \exp\left(-\frac{\theta^2}{4\sqrt{\Delta}}\, g(x_0^+)\left(1-e^{-2\sqrt{\Delta} t}\right)\right)  & \textrm{if $z_0=x_0^+$}.
                         \end{array}\right. 
\end{equation}
In particular, we have
$$\Var(V_t^{z_0})=\left\{ \begin{array}{ll}
                         F^2(z(z_0,t))\int\limits_{z_0}^{z(z_0,t)}\frac{g(y)}{F^3(y)}dy,  & \textrm{if $z_0\neq x_0^+$},\\
                             
                          \frac{g(x_0^+)}{2\sqrt{\Delta}}\left(1-e^{-2\sqrt{\Delta} t}\right)   & \textrm{if $z_0=x_0^+$}.
                         \end{array}\right.$$
\end{lemma}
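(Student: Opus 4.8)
The plan is to exploit that, in both cases, $V_t^{z_0}$ is a \emph{centered} Gaussian random variable: it is a deterministic multiple of a Wiener integral whose integrand is deterministic, hence centered Gaussian, and multiplication by the deterministic prefactor preserves both centering and Gaussianity. In particular $E[V_t^{z_0}]=0$, so there is no linear term in $\theta$, and the law of $V_t^{z_0}$ is completely determined by its variance. Thus $\psi(t,\theta)=\exp\left(-\tfrac{\theta^2}{2}\Var(V_t^{z_0})\right)$, and the two displayed characteristic functions follow by simply reading off the exponent once the variance is known. The whole task therefore reduces to computing $\Var(V_t^{z_0})$.

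Next I would evaluate the variance by the It\^o isometry. In the case $z_0=x_0^+$, where $V_t^{z_0}=\sqrt{g(x_0^+)}\,e^{-\sqrt{\Delta}\,t}\int_0^t e^{\sqrt{\Delta}\,v}\,dB_v$, the isometry gives $\Var(V_t^{z_0})=g(x_0^+)\,e^{-2\sqrt{\Delta}\,t}\int_0^t e^{2\sqrt{\Delta}\,v}\,dv$, and the elementary exponential integral yields $\frac{g(x_0^+)}{2\sqrt{\Delta}}(1-e^{-2\sqrt{\Delta}\,t})$, which is exactly the claimed variance. In the generic case $z_0\neq x_0^+$, the deterministic factor $F(z(z_0,t))$ comes out of the expectation, and the isometry gives $\Var(V_t^{z_0})=F^2(z(z_0,t))\int_0^t \frac{g(z(z_0,v))}{F^2(z(z_0,v))}\,dv$.

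The only substantive point is to recast this time integral as the spatial integral $\int_{z_0}^{z(z_0,t)}\frac{g(y)}{F^3(y)}\,dy$ in the statement. For this I would change variables via $y=z(z_0,v)$; since $z(z_0,\cdot)$ solves \eqref{deteq}, one has $dy=F(z(z_0,v))\,dv$, so $dv=dy/F(y)$, and the bounds transform from $v\in[0,t]$ to $y\in[z_0,z(z_0,t)]$. Substituting turns $\frac{g}{F^2}\,dv$ into $\frac{g(y)}{F^3(y)}\,dy$, giving precisely the asserted formula. The substitution is licit because $F$ does not vanish along the trajectory: within $[0,1]$ the only zero of $F$ is $x_0^+$ (recall $x_0^-<0$), and since $z_0\neq x_0^+$ the solution approaches but never reaches $x_0^+$ in finite time, so $F(z(z_0,v))$ keeps a constant sign for $v\in[0,t]$ and $y\mapsto 1/F(y)$ is smooth on the interval swept out by the path.

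I expect this sign-and-monotonicity bookkeeping to be the main obstacle, though it is still routine: everything else is a direct application of the It\^o isometry together with the Gaussianity of Wiener integrals. Once the variance formulas are established, the expressions for $\psi(t,\theta)$ in \eqref{chatff} are immediate, and the stated variances are recorded along the way.
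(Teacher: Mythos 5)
Your proposal is correct, and it reaches the result by a slightly more elementary route than the paper. The paper's proof invokes the Dambis--Dubins--Schwartz theorem: it observes that the stochastic integral in the definition of $V^{z_0}$ is a local martingale with deterministic quadratic variation $\int_0^t g(z(z_0,v))/F^2(z(z_0,v))\,dv$, checks that this quantity diverges as $t\to\infty$ (which is needed to apply DDS in its clean form), represents the integral as a time-changed Brownian motion $\beta_{\tau(t)}$, and then reads off $\psi(t,\theta)$ from $E[e^{i\lambda\beta_t}]=e^{-\lambda^2 t/2}$; the variance formula is then extracted \emph{from} the characteristic function by differentiating twice at $\theta=0$. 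You go the other way: you use the elementary fact that a Wiener integral of a deterministic integrand is a centered Gaussian variable, compute its variance by the It\^o isometry, and only then write down $\psi(t,\theta)=\exp(-\tfrac{\theta^2}{2}\Var(V_t^{z_0}))$. This avoids DDS altogether (and with it the need for the divergence condition on the quadratic variation), since Gaussianity at each fixed $t$ is all the lemma requires. Both arguments hinge on the same change of variables $y=z(z_0,v)$, $dy=F(y)\,dv$, converting the time integral $\int_0^t g/F^2\,dv$ into the spatial integral $\int_{z_0}^{z(z_0,t)} g(y)/F^3(y)\,dy$; the paper leaves this step entirely implicit, whereas you spell it out and justify it by noting that $F$ has no zero on the trajectory (the only root of $F$ in $[0,1]$ is $x_0^+$, since $x_0^-<0$, and a solution starting at $z_0\neq x_0^+$ never reaches the equilibrium in finite time). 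That justification is a genuine point in favour of your write-up, as it is exactly what makes both the well-definedness of $V^{z_0}$ and the substitution legitimate.
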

\begin{proof}
We give the proof for $z_0\neq x_0^+$, the remaining case follows similarly. The second statement follows from the first one, evaluating at $\theta=0$ the second derivative of $\psi$ with respect to $\theta$.

Note that the stochastic integral appearing in the definition of $V^{z_0}$ is a local martingale with deterministic quadratic variation given by 
$\int_0^t g(z(z_0,v))/F^2(z(z_0,v))dv$. Since $\int_0^\infty g(z(z_0,v))/F^2(z(z_0,v))dv=\infty$, we deduce from the Dambis-Dubins-Schwartz theorem (see \cite[Chap. 5, Theorem 1.6]{ReYor99}) that
$$\left(\int_0^t \frac{\sqrt{g(z(z_0,v))}}{F(z(z_0,v))}dB_v\right)_{t\geq 0}\overset{(d)}{=}\left(\beta_{\int\limits_0^t\frac{g(z(z_0,v))}{F^2(z(z_0,v))}dv}\right)_{t\geq 0},$$
where $(\beta_t)_{t\geq 0}$ is a standard Brownian motion. The result is obtained using that $E[e^{i\lambda \beta_t}]=e^{-\frac{\lambda^2}{2}t}$.
\end{proof}
The main significance of Theorem \eqref{pctt} is that it provides a central limit theorem for the stationary distributions $\pi_{Z^N}$. We point out that such a result makes also part of \cite[Theorem 2.7]{Ku76} in the framework of density dependent families of Markov chains. However, in the latter there is a mistake in the variance of the corresponding limiting Gaussian distribution. For this reason, we provide a detailed proof to the next result. 
\begin{corollary}\label{csd}
Let $Z^N_\infty$ be a random variable distributed as $\pi_{Z^N}$, then
\begin{equation*}
 \sqrt{N}\left( Z^N_\infty-x_0^+\right)\xrightarrow[N\rightarrow\infty]{(d)}\Ns\left(0,\frac{g(x_0^+)}{2\,\sqrt{\Delta}}\right).
\end{equation*}
In particular, we have $\pi_{Z^N}\xrightarrow[N\rightarrow\infty]{w}\delta_{x_0^+}$.
\end{corollary}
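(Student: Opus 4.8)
The plan is to prove the convergence of the characteristic function of $\sqrt{N}(Z^N_\infty-x_0^+)$ to that of the announced Gaussian law and then to conclude by L\'evy's continuity theorem. Writing $\chi_N(\theta):=E[e^{i\theta\sqrt{N}(Z^N_\infty-x_0^+)}]$, the key idea is to realise the stationary law as the large-time limit of the transient law and to interchange this limit with the limit $N\to\infty$. This interchange is exactly what the \emph{uniform in $t$} estimate of Theorem \eqref{pctt} makes legitimate; the fixed-$t$ convergence in distribution alone would not suffice.

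First I would run the chain from the stable equilibrium. Taking $Z_0^N$ to be a lattice point nearest to $x_0^+$, so that $|Z_0^N-x_0^+|\le 1/(2N)$ and hence $\sqrt{N}(Z_0^N-x_0^+)\to 0$, the hypothesis of Theorem \eqref{pctt} holds with $z_0=x_0^+$. Since $x_0^+$ is an equilibrium we have $z(x_0^+,t)=x_0^+$ for all $t$, so the function $\psi^N$ defined before Theorem \eqref{pctt} reads $\psi^N(t,\theta)=E[e^{i\theta\sqrt{N}(Z_t^N-x_0^+)}]$. Because $u>0$, the rates satisfy $\lambda_k^N>0$ for $k<N$ and $\mu_k^N>0$ for $k>0$, so $X^N$ (hence $Z^N$) is an irreducible Markov chain on the finite set $\{0,\dots,N\}$; it is therefore ergodic, and for each fixed $N$ the law of $Z_t^N$ converges to $\pi_{Z^N}$ as $t\to\infty$, whence $\psi^N(t,\theta)\to\chi_N(\theta)$. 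On the deterministic side, Lemma \ref{chf} gives $\psi(t,\theta)=\exp(-\tfrac{\theta^2}{4\sqrt{\Delta}}\,g(x_0^+)(1-e^{-2\sqrt{\Delta}\,t}))$, whence $\psi(t,\theta)\to\exp(-\tfrac{\theta^2}{2}\cdot\tfrac{g(x_0^+)}{2\sqrt{\Delta}})$ as $t\to\infty$, which is precisely the characteristic function of $\Ns(0,g(x_0^+)/(2\sqrt{\Delta}))$.

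I would then perform the interchange of limits. Fix $\theta\in\Rb$ and $\varepsilon>0$. By the second assertion of Theorem \eqref{pctt} there is $N_0$ with $\sup_{t\ge 0}|\psi^N(t,\theta)-\psi(t,\theta)|<\varepsilon$ for all $N\ge N_0$. Letting $t\to\infty$ in this inequality and invoking the two large-time limits just recorded yields $|\chi_N(\theta)-\exp(-\tfrac{\theta^2}{2}\cdot\tfrac{g(x_0^+)}{2\sqrt{\Delta}})|\le\varepsilon$ for every $N\ge N_0$. As $\theta$ and $\varepsilon$ are arbitrary, $\chi_N(\theta)\to\exp(-\tfrac{\theta^2}{2}\cdot\tfrac{g(x_0^+)}{2\sqrt{\Delta}})$ pointwise, and L\'evy's continuity theorem delivers $\sqrt{N}(Z^N_\infty-x_0^+)\xrightarrow[N\to\infty]{(d)}\Ns(0,g(x_0^+)/(2\sqrt{\Delta}))$. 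The final assertion follows from Slutsky's lemma: $Z^N_\infty-x_0^+=N^{-1/2}\sqrt{N}(Z^N_\infty-x_0^+)$ converges in distribution to $0$, hence in probability, so that $\pi_{Z^N}\xrightarrow[N\to\infty]{w}\delta_{x_0^+}$.

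The main obstacle is the justification of the double limit $\lim_{N\to\infty}\lim_{t\to\infty}=\lim_{t\to\infty}\lim_{N\to\infty}$; everything rests on the uniformity in $t$ of Theorem \eqref{pctt}, and the cleanest way to exploit it is to start the chain at $x_0^+$, so that the recentring $z(z_0,t)$ is constant and $\psi^N(t,\theta)$ is literally the characteristic function to be controlled. Starting from a general $z_0\in[0,1]$ would work identically, once one checks via Lemma \ref{chf} that $\lim_{t\to\infty}F^2(z(z_0,t))\int_{z_0}^{z(z_0,t)}g(y)/F^3(y)\,dy=g(x_0^+)/(2\sqrt{\Delta})$, but this extra computation is less transparent than placing the initial condition at the equilibrium.
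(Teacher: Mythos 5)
Your proof is correct and follows essentially the same route as the paper's: both pass to characteristic functions, use the uniformity in $t$ from Theorem \ref{pctt} to interchange the limits $t\to\infty$ and $N\to\infty$, use ergodicity of the finite irreducible chain to get $\lim_{t\to\infty}\psi^N(t,\theta)=\chi_N(\theta)$, use Lemma \ref{chf} for $\lim_{t\to\infty}\psi(t,\theta)$, and conclude via L\'evy's continuity theorem. The one genuine difference is your initialization at a lattice point nearest $x_0^+$ (legitimate, since $\pi_{Z^N}$ does not depend on the starting point): the paper instead works with a general $z_0$, so its triangle inequality carries an extra recentring term $\left\lvert e^{i\theta\sqrt{N}x_0^+}-e^{i\theta\sqrt{N}z(z_0,t)}\right\rvert$, disposed of via the stability property \eqref{at}, and it must invoke $\lim_{t\to\infty}\Var(V_t^{z_0})=g(x_0^+)/(2\sqrt{\Delta})$ also for $z_0\neq x_0^+$, an indeterminate $0\cdot\infty$ limit of $F^2(z(z_0,t))\int_{z_0}^{z(z_0,t)}g(y)F^{-3}(y)\,dy$ that Lemma \ref{chf} states but never actually evaluates. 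Your specialization $z_0=x_0^+$ makes the recentring term vanish identically and renders the variance limit explicit, so it trades the paper's generality for a cleaner argument that sidesteps that unverified computation.
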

\begin{proof}
Note first that, Lemma \ref{chf} implies that $\lim_{t\rightarrow\infty}\Var(V_t^{z_0}):=\frac{g(x_0^+)}{2\,\sqrt{\Delta}}$. In addition, the random variables $V_t^{z_0}$ are centred and Gaussian. We conclude that
\begin{equation}\label{e1}
 V_t^{z_0}\xrightarrow[t\rightarrow\infty]{(d)}V_{\infty}^{z_0},
\end{equation}
where $V_{\infty}^{z_0}\sim\Ns\left(0,\frac{g(x_0^+)}{2\,\sqrt{\Delta}}\right)$. In addition, from the irreducibility of $Z^N$, we also have 
\begin{equation}\label{e2}
Z_t^N\xrightarrow[t\rightarrow\infty]{(d)}Z_\infty^N,
\end{equation}
where $Z_\infty^N\sim\pi_{Z^N}$. 

We fix now $\varepsilon>0$. Theorem \ref{pctt} implies that, there is $N_0(\varepsilon)$ such that, for all $N\geq N_0(\varepsilon)$, $\sup_{t\geq 0}|\psi_N(t,\theta)-\psi(t,\theta)|\leq \varepsilon$. Therefore, we have for $N\geq N_0(\varepsilon)$
\begin{align}\label{e4}
 \left\lvert E\left[e^{i\theta\sqrt{N}\left(Z_\infty^N-x_0^+\right)}\right]-E\left[e^{i\theta V^{z_0}_\infty}\right]\right\lvert&\leq \left\lvert E\left[e^{i\theta\sqrt{N} Z_\infty^N}\right]-E\left[e^{i\theta\sqrt{N} Z_t^N}\right]\right\lvert\nonumber\\
 & +\left\lvert e^{i\theta\sqrt{N} x_0^+}-e^{i\theta\sqrt{N} z(z_0,t)}\right\lvert+\varepsilon\nonumber\\
 & +\left\lvert E\left[e^{i\theta V_t^{z_0}}\right]-E\left[e^{i\theta V_\infty^{z_0}}\right]\right\lvert.\nonumber
\end{align}
Using \eqref{e1}, \eqref{e2}, \eqref{at} and taking the limit when $t$ tends to infinity in the previous inequality,  we deduce that
$$\left\lvert E\left[e^{i\theta\sqrt{N}\left(Z_\infty^N-x_0^+\right)}\right]-E\left[e^{i\theta V^{z_0}_\infty}\right]\right\lvert\leq \varepsilon.$$
Since this holds for all $\varepsilon>0$, the result follows.

\end{proof}

\subsection*{Acknowledgements}
I would like to thank Ellen Baake and Tom Kurtz for stimulating and fruitful discussions. This project received financial support from the Priority Programme \textit{Probabilistic Structures in Evolution} (SPP 1590), which is funded by Deutsche Forschungsgemeinschaft.
\bibliographystyle{acm}
\bibliography{reference}
\end{document}